\newtheorem{theorem}{Theorem}[section]
\newtheorem{prop}[theorem]{Proposition}
\newtheorem*{thma}{Main Theorem}
\theoremstyle{definition}
\newtheorem{defn}{Definition}[section]
\newtheorem{lemma}{Lemma}[section]
\newtheorem{rem}{Remark}[section]
\DeclareMathOperator{\ind}{ind}
\DeclareMathOperator{\eind}{ind_E}
\DeclareMathOperator{\codim}{codim}
\DeclareMathOperator{\inter}{int}
\DeclareMathOperator{\XX}{\hat{X}}
\DeclareMathOperator{\Xx}{\check{X}}
\title{Morse cohomology in a Hilbert space via the Conley Index}
\author{Maciej Starostka}
\begin{document}
\date{}
\maketitle

\begin{abstract}
   The main theorem of this paper states that Morse cohomology groups in a Hilbert space are isomorphic to the cohomological Conley Index. It is also shown that calculating the cohomological Conley Index does not require finite dimensional approximations of the vector field. Further directions are discussed.
\end{abstract}


\section{Introduction.}
The aim of this paper is to show that Morse cohomology groups defined for a certain functional in a Hilbert space can be recovered via the Conley Index. This was motivated by the growing number of different Floer cohomology theories in three and four-dimensional topology. A lot of applications come from the fact that some of those  theories are equivalent. As an example, let us mention the Seiberg-Witten-Floer cohomology and the Embedded Contact Floer cohomology. The equivalence between those two was used to find Reeb orbits on contact manifolds. Some stronger versions of the Weinstein conjectures were obtained (cf. \cite{Taubes}). \\
However, some of the Floer theories are still conjectured to be equivalent, e.g. Seiberg - Witten - Floer ($\mbox{HSW}$) cohomology and Monopole - Floer cohomology ($\mbox{HM}$). The former is defined by the Conley Index while the latter one by counting connecting orbits. The idea of using the Conley Index instead of Floer theory for Seiberg - Witten equations was first introduced by C.Manolescu in \cite{Man}. One of the motivations was the fact that we do not have to deal with transversality. \\
Our approach to the Floer theory via the Conley Index is slightly different than in \cite{Man}. We would like to work with an index pair
 in a Hilbert space and apply the concept of G\k{e}ba-Granas cohomology (see \cite{GG}). This allows us to avoid finite dimensional approximations of the vector field. Results presented below are obtained by the facts that those cohomology groups satisfy axioms of the generalized cohomology theory (see below for a precise statement) and that they are invariant under the flow deformations. Those two facts were proved by A.Abbondandolo in \cite{abbo}. \\


\section{E-cohomology.}

$E$-cohomology groups are defined to be the direct limit of the certain ordinary cohomology groups. Let us remark on the latter first.  The main requirement is that the theory satisfies the strong excision axiom. There are various possible choices, e.g. $\check{C}$ech cohomology groups (\cite{GG}) or Alexander-Spanier cohomology groups (\cite{abbo}). We choose the homotopical point of view. Let us recall the definition from the Appendix in \cite{Crabb} (cf. \cite{Prieto} ch. 7). Denote by $K(\mathbb{F},n)$ an appropriate Eilenberg - Maclane space. For a topological pair ($X$,$Y$) define cohomology groups by
$$H^n(X,Y) = [X \cup \mbox{CY}, K(\mathbb{F},n)], $$
where $\mbox{CY}$ is a cone on $Y$. In the case when $X$ is a compact Hausdorff space and $Y$ is its closed subset  $H^n(X,Y)$ coincides with the Alexander - Spanier cohomology group and
$$H^n(X,Y) = [X/Y, K(\mathbb{F},n)].$$
One can also define cohomology groups with compact supports of a locally compact Hausdorff space $U$  by
$$H^n_c(U) = H^n(U^+,\ast) = [U, K(\mathbb{F},n)]_c,$$
where $U^+$ denotes the one-point compactification of $U$ and $[,]_c$ denotes homotopy classes of compactly supported maps.\\
Throughout rest of the paper, we take $\mathbb{F} = \mathbb{Z}_2$.\\

We are now ready to give an overview on what $E$-cohomology is. Let $E$ be a Hilbert space with a splitting $E = E^+ \oplus E^-$ where each of $E^+$ and $E^-$ is either infinite dimensional or trivial.
We say that $\{E_n\}_{n \in \mathbb{N}}$ is an approximating system for $E$ if
\begin{enumerate}
\item $E_n$ is a finite dimensional subspace of $E$ for every $n$;
\item there is an inclusion  $i_{n,n'}: E_n \hookrightarrow E_{n'}$ for every $n' > n$ ;
\item $\overline{\bigcup_n E_n }  = E$.
\end{enumerate}
We recall the definition of $E$-cohomology in two extremal cases: when $E^+ = \{0\}$, $E^- = l^2$ and when $E^+ = l^2$, $E^- = \{0\}$. For $l^2$ we take an approximation system induced by the spaces of finite sequences. However, one can prove (see \cite{GG}, \cite{abbo}) that the definition does not depend on the choice of the approximation system. \\ Let us first consider the case of $E =  \{0\} \oplus l^2  $. Take a closed and bounded set $X \subset E$. We define \textit{finite codimensional cohomology} in the following way (\cite{GD}). Put\\
 $E_n = \{(x_1,x_2,...) \in E: x_k = 0 \quad \mbox{for} \quad k > n\}$\\
$\hat{E}_n = \{(x_1,x_2,...) \in E_n: x_n \geq 0\}$\\
$\check{E}_n = \{(x_1,x_2,...) \in E_n: x_n \leq 0\}$\\
and
$X_n = X \cap E_n$, $\hat{X}_n = X \cap \hat{E}_n$, $\check{X}_n = X \cap \check{E}_n$. \\
\begin{center} \includegraphics[scale=0.7]{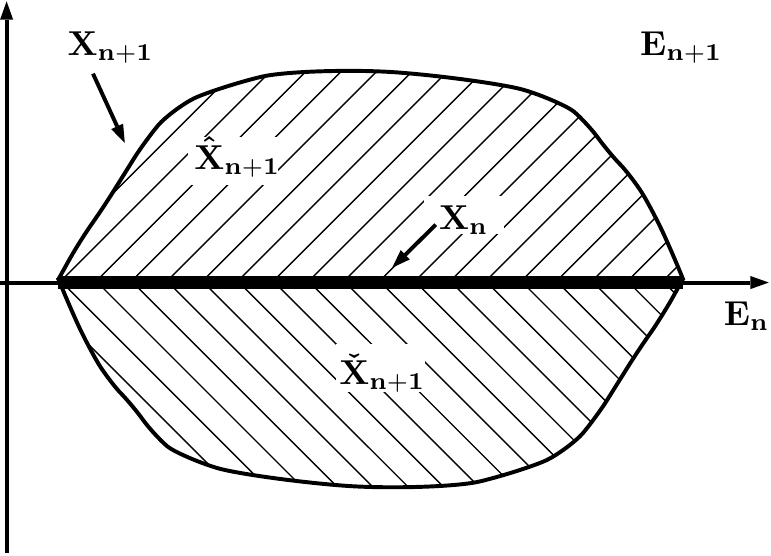} \end{center}

Since $\XX_{n+1} \cap \Xx_{n+1} = X_n$  and $\XX_{n+1} \cup \Xx_{n+1} = X_{n+1}$ the Mayer-Vietoris sequence for a triad $(X_{n+1},\XX_{n+1},\Xx_{n+1})$ gives a homomorphism
$$\delta_n: H^k(X_n) \to H^{k+1}(X_{n+1}). $$

\begin{defn}
Finite codimensional cohomology groups on $l^2$ are defined to be
$$H_E^{k}(X) = \varinjlim (H^{\dim E_n + k} (X_n) , \delta_n)$$
\end{defn}
Notice that if $k > 0$ then $\dim E_n + k > \dim X_n$. Thus, above groups can be nontrivial only for negative $k$. We have chosen a convention which is compatible with \cite{abbo} and opposite to that in \cite{GD}. This would be more convenient when we deal with the case when both $E^+$ and $E^-$ are nonzero.\\

As the simplest nontrivial example take $X = S(E)$ i.e. $X$ is a unit sphere in $E$. Then $X_n = S(E_n)$, $H^{n-1}(X_n) = \mathbb{Z}_2$ and all the maps $\delta_n$ are isomorphisms. Thus $H^{-1}_E(S(E)) \simeq \mathbb{Z}_2$ and $H^k_E(S(E))$ is trivial if $k \ne -1$ (notice that since $H^0(S(E_n))$ is mapped by $\delta_n$ to $H^{\dim{E_{n+1}} - \dim{E_n}}(S(E_{n+1}))$ we do not see zero'th cohomology of the sphere). Let us also emphasize that if  $X$ is compact (in particular if it is contained in a finite dimensional subspace) then all the $E$-cohomology groups are trivial.
For a general separable Hilbert space $E = \{0\} \oplus E^-$ we can take an isomorphism with $l^2$ (i.e. choose an approximating system) and repeat the construction.  \\
This simple concept can be generalized in many directions. K.G\c{e}ba and A.Granas (\cite{GG}) proved that for any generalized cohomology theory above groups are well defined. For example, taking cohomotopy groups instead of cohomology groups gives us \textit{stable} cohomotopy groups. In addition, they proved that the resulting theory is always a generalized cohomology theory on the Leray-Schauder category. Morphisms of the Leray - Schauder category are compact fields (i.e. $\mbox{Id} + K$ where $K$ is compact)so one could try to apply above techniques to fixed point theory. \\
However, it is good to extend the set of morphisms of the  Leray - Schauder category.  One reason for this is that one can not compare spheres of different radius via maps $\mbox{Id} + K$. Obviously, such spheres should have the same cohomology groups. \\
First of all, notice that above cohomology groups are trivially invariant under translations. A.Abbondandolo proved that they are also invariant under the flow deformations. This allows us to compare spheres of different radius and also, which is more important, to use Morse theory and the Conley Index techniques. \\
Another feature of \cite{abbo},\cite{KSZ} and \cite{sz} is a generalization to so-called \textit{middle dimension cohomology} i.e. the case when both $E^+$ and $E^-$ are of infinite dimension. Before introducing that, let us consider second extremal example: $E^+ = l^2$, $E^- = \{0\}$.  In this case $E$ - cohomology groups are defined by
$$H^k_E(X) = \varinjlim (H^k(X_m,\delta_m')).$$
where $\delta_m'$ is induced by the inclusion of $X_m$ into $X_{m+1}$. One can easily see that if $X$ is a sphere in $E_m$ then $H^k_E(X)$ is nontrivial (and equal to $\mathbb{Z}_2$) only if $k = m-1$ or $k = 0$. In fact, it is true that if $X$ is locally compact then $H^*_E$ is isomorphic to the \textbf{compactly supported} cohomology mentioned above. \\

 Now the \textit{middle dimensional cohomology groups} are defined to be
$$H_{E}^{k}(X) = \varinjlim (H^{\dim E^-_n + k} (X_{(m,n)} ,  \delta_m', \delta_n)),$$
where $X_{m,n} = X \cap (E_m \oplus E_n)$,  $\delta_n: X_{m,n} \to X_{m,n+1}$ is the map from the Mayer - Vietoris sequence and $\delta_m': X_{m,n} \to X_{m+1,n}$ is the map induced by inclusion. Again, this definition does not depend on the approximating system. \\
 One can think of $E$ cohomology as cohomology of finite codimension cohomology with respect to $E^-$ and cohomology with compact supports with respect to $E^+$. \\

We would like to emphasize the fact that $E$-cohomology groups satisfy axioms of a generalized cohomology theory (see Theorem $0.2$ in \cite{abbo}). All the results presented below can be obtained by these axioms without the knowledge of the precise construction of $E$-cohomology groups. For the sake of completeness, let us recall the homotopy invariance, the strong excision axiom and the long exact sequence for a triple. \\

\begin{defn}
A continuous map $\Psi: (X,A)  \to (Y,B) $ is an $E$-\textit{morphism} if:
\begin{enumerate}
  \item it has the form
  $$\Psi(x) = Lx + K(x),$$
            where $L$ is a linear automorphism of $E$ such that $LE^+ = E^+$ and $K$ maps bounded sets into precompact sets.
  \item $\Psi^{-1}(U)$ is bounded for every bounded set $U$.
\end{enumerate}
\end{defn}

We also say that $E$-morphisms $\Phi$ and $\Phi'$ from (X,A) to (Y,B) are $E$-homotopic if there exists an $E$-homotopy joining them i.e.
a continuous map $\Psi: (X,A) \times [0,1] \to (Y,B) $ such that
\begin{enumerate}
  \item
  $$\Psi(x,t) = L_tx + K(x,t),$$
            where $L_t$ is a linear automorphism of $E$ and $K$ maps bounded sets into precompact sets.
  \item $\Psi^{-1}(U)$ is bounded for every bounded set $U$
  \item $\Psi(\cdot,0) = \Phi$ and $\Psi(\cdot,1) = \Phi'$.
\end{enumerate}

Above definitions allow us to state:



\begin{itemize}
\item { (Homotopy invariance)  if two E - morphisms $\Phi$ and $\Phi'$ are $E$ - homotopic, then $H^*_E(\Phi) = H^*_E(\Phi')$};
\item (Strong excision) if $X$ and $Y$ are closed and bounded subsets of $E$ and $i\colon (X,X \cap Y) \to (X \cup Y, Y)$ is the inclusion map, then $H^*_E(i)$ is an isomorphism;
\item (Long exact sequence) For a triple $X \subset Y \subset Z$  of closed and bounded sets we have a long exact sequence
$$\ldots \to H^k_E(Z,Y) \to H^k_E(Z,X) \to H^k_E(Y,X)  \xrightarrow{\delta} H^{k+1}_E(Z,Y) \to H^{k+1}_E(Z,X) \to \ldots$$
\end{itemize}


In the proof of Proposition \ref{t:ipairs} we will also need two following lemmas (cf. \cite[pp. 372-373]{abbo} ).

\begin{lemma}
\label{lem1}
Let $Y$ be a closed subset of $X$. If there exists an $E$-homotopy
$$\Psi: (X,A) \times [0,1] \to (X,A) $$
such that $\Psi_0 = \mbox{Id}$, $\Psi_1(X) \subset Y$ and $\Psi_t(Y) \subset Y$ for every $t \in [0,1]$, then
$$H^*_E(X,A) \simeq H^*_E(Y,A),$$
the isomorphism being induced by the inclusion map.
\end{lemma}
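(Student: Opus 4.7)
The plan is to show that the inclusion $i\colon (Y,A)\hookrightarrow (X,A)$ and the restriction $r=\Psi_1\colon (X,A)\to (Y,A)$ are mutually inverse in $E$-cohomology by exhibiting $E$-homotopies $i\circ r\simeq \mathrm{Id}_{(X,A)}$ and $r\circ i\simeq \mathrm{Id}_{(Y,A)}$, and then invoking the homotopy invariance axiom stated above.

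First I would verify that the two candidate maps are genuine $E$-morphisms. The inclusion $i$ has the form $\mathrm{Id}\cdot x+0$ with $L=\mathrm{Id}$ satisfying $LE^+=E^+$, and $i^{-1}(U)=U\cap Y$ is bounded whenever $U$ is bounded, so $i$ qualifies. For $r=\Psi_1$, the hypothesis that $\Psi$ is an $E$-homotopy gives $\Psi_1(x)=L_1 x+K(\cdot,1)(x)$ with $L_1$ a linear automorphism preserving $E^+$ and $K(\cdot,1)$ mapping bounded sets to precompact ones; the boundedness of preimages for $\Psi_1$ follows from the corresponding property of the homotopy $\Psi$ restricted to $t=1$. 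The assumption $\Psi_1(X)\subset Y$ and $\Psi(A\times[0,1])\subset A$ ensure that $r$ is well defined as a map of pairs $(X,A)\to(Y,A)$.

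Next I would produce the two homotopies. The homotopy $i\circ r\simeq \mathrm{Id}_{(X,A)}$ is given by $\Psi$ itself: $\Psi_0=\mathrm{Id}$ and $\Psi_1=i\circ r$, while the definition of $E$-homotopy is satisfied by construction. For $r\circ i\simeq \mathrm{Id}_{(Y,A)}$ I would use the restriction $\Psi|_{Y\times[0,1]}$; the hypothesis $\Psi_t(Y)\subset Y$ for every $t$ ensures that this restriction takes values in $(Y,A)$, and the analytic conditions (form $L_tx+K(x,t)$, boundedness of preimages) are inherited immediately from $\Psi$ since the linear part $L_t$ does not depend on the domain. The endpoints are $\Psi_0|_Y=\mathrm{Id}_Y$ and $\Psi_1|_Y=r\circ i$.

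Applying homotopy invariance to both homotopies gives $H^*_E(r)\circ H^*_E(i)=\mathrm{Id}$ on $H^*_E(Y,A)$ and $H^*_E(i)\circ H^*_E(r)=\mathrm{Id}$ on $H^*_E(X,A)$, so $H^*_E(i)$ is the desired isomorphism. The only real point requiring care is the bookkeeping in the second homotopy: one must check that $\Psi|_{Y\times[0,1]}$ still satisfies condition~(2) in the definition of $E$-homotopy, i.e.\ that preimages of bounded sets remain bounded after restriction; this is immediate since $(\Psi|_Y)^{-1}(U)\subset \Psi^{-1}(U)\cap (Y\times[0,1])$. I expect this verification, together with the check that $\Psi_t$ preserves $A$ when restricted to $Y$ (which follows from $\Psi$ being a homotopy of pairs combined with $A\subset Y$ if one assumes it, or more generally from $\Psi_t(A)\subset A$ in the ambient homotopy), to be the only mildly delicate step.
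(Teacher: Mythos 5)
Your argument is correct and is the standard one: the inclusion $i\colon (Y,A)\to(X,A)$ and $r=\Psi_1\colon (X,A)\to(Y,A)$ are mutually inverse up to $E$-homotopy (via $\Psi$ itself and via $\Psi|_{Y\times[0,1]}$, using $\Psi_t(Y)\subset Y$), so homotopy invariance and functoriality give that $H^*_E(i)$ is an isomorphism; this matches the argument the paper relies on, since here the lemma is only quoted from \cite[pp.~372--373]{abbo} without proof. Two minor points: by contravariance $H^*_E(r)\circ H^*_E(i)$ is the identity of $H^*_E(X,A)$ and $H^*_E(i)\circ H^*_E(r)$ that of $H^*_E(Y,A)$ (you stated the two the other way round, which is harmless since both composites are identities), and the statement implicitly assumes $A\subset Y$ --- as in its application with $X=N_1$, $Y=N_1^t\cup N_0$, $A=N_0$ --- a point you correctly flagged.
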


\begin{lemma}
\label{lem2}
Let $B$ be a closed subset of $A$. If there exists an $E$-homotopy
$$\Psi: (X,A) \times [0,1] \to (X,A) $$
such that $\Psi_0 = \mbox{Id}$, $\Psi_1(A) \subset B$ and $\Psi_t(B) \subset B$ for every $t \in [0,1]$, then
$$H^*_E(X,A) \simeq H^*_E(X,B),$$
the isomorphism being induced by the inclusion map.
\end{lemma}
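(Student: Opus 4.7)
The plan is to apply the long exact sequence of the triple $B \subset A \subset X$, which (with the paper's conventions, taking our $B, A, X$ as their $X, Y, Z$) reads
\begin{equation*}
\cdots \to H^{k-1}_E(A,B) \to H^k_E(X,A) \to H^k_E(X,B) \to H^k_E(A,B) \to \cdots
\end{equation*}
The middle arrow is induced by the inclusion $(X,B) \hookrightarrow (X,A)$, which is exactly the map claimed to be an isomorphism. So it suffices to prove that $H^k_E(A,B) = 0$ for every $k$.

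To get this vanishing, I would show that the pair $(A,B)$ is $E$-homotopy equivalent to the pair $(B,B)$, whose $E$-cohomology is zero in every degree (in each finite-dimensional approximation $(B_{(m,n)}, B_{(m,n)})$ one already has trivial relative cohomology, so the direct limit vanishes). The candidate maps are the inclusion $i\colon (B,B) \hookrightarrow (A,B)$ and the restriction $\Psi_1\colon (A,B) \to (B,B)$, which is a well-defined map of pairs because $\Psi_1(A)\subset B$ and $\Psi_1(B)\subset B$ by hypothesis. Both are $E$-morphisms: $i$ trivially, and $\Psi_1$ as the time-one slice of the given $E$-homotopy. The composition $\Psi_1\circ i = \Psi_1|_B$ is $E$-homotopic to $\mathrm{Id}_B$ via the restriction of $\Psi$ to $B\times[0,1]$, which lands in $B$ by the invariance assumption $\Psi_t(B)\subset B$. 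The other composition $i\circ\Psi_1$, viewed as a self-map of $(A,B)$, is $E$-homotopic to $\mathrm{Id}_A$ via $\Psi$ restricted to $A\times[0,1]$, using that $\Psi$ preserves $A$ (since by assumption it is a homotopy of the pair $(X,A)$) and preserves $B$. The homotopy invariance axiom then yields $H^*_E(A,B)\simeq H^*_E(B,B) = 0$, and feeding this back into the long exact sequence collapses it to isomorphisms $H^k_E(X,A)\xrightarrow{\sim} H^k_E(X,B)$.

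I do not anticipate a serious obstacle. The only point that needs care is verifying that the restrictions of $\Psi$ to $A\times[0,1]$ and to $B\times[0,1]$ are themselves $E$-homotopies: the decomposition $\Psi(x,t)=L_tx+K(x,t)$ and the properness condition on preimages of bounded sets are inherited by restriction of the domain, so this check is routine. The whole argument is the standard topological fact that a deformation retract of the subspace does not change the relative cohomology, transported into the $E$-cohomology framework via the axioms stated just above; the proof parallels that of Lemma \ref{lem1}, with the roles of the ambient space and the subspace interchanged.
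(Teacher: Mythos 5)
Your argument is correct: the restrictions of $\Psi$ to $A\times[0,1]$ and $B\times[0,1]$ are indeed $E$-homotopies (the decomposition $L_tx+K(x,t)$ and the boundedness of preimages survive restriction, the latter trivially since $A$ is bounded), so $(A,B)$ and $(B,B)$ are $E$-homotopy equivalent, $H^*_E(A,B)=0$, and the triple sequence for $B\subset A\subset X$ forces the inclusion-induced maps $H^k_E(X,A)\to H^k_E(X,B)$ to be isomorphisms. Note, however, that the paper does not prove this lemma at all — it refers to \cite[pp.~372--373]{abbo} — and the proof given there (and the shortest one available from the stated axioms) is more direct than yours: since $\Psi_1(A)\subset B$, the time-one map $\Psi_1$ is an $E$-morphism of pairs $(X,A)\to(X,B)$, and it is an $E$-homotopy inverse of the inclusion $j\colon(X,B)\to(X,A)$, because $j\circ\Psi_1\simeq\mathrm{Id}_{(X,A)}$ via $\Psi$ itself and $\Psi_1\circ j\simeq\mathrm{Id}_{(X,B)}$ via the restriction of $\Psi$ to $(X,B)\times[0,1]$, which lands in $(X,B)$ precisely because $\Psi_t(B)\subset B$; homotopy invariance plus functoriality then give that $H^*_E(j)$ is an isomorphism, with no exact sequence needed. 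Your detour buys nothing essential but is harmless; its one blemish is that you justify $H^*_E(B,B)=0$ by peeking at the finite-dimensional construction, whereas the paper's stance is that everything should follow from the axioms — and it does, e.g.\ from the exact sequence of the triple $(B,B,B)$, whose maps induced by identity inclusions force all groups $H^k_E(B,B)$ to vanish.
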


\section{Conley Index.}

We make the following assumptions throughout this paragraph.\\
Let $f \in C^2(E,\mathbb{R})$ be a function of the form
$$f(x) = \frac12 \langle Lx, x\rangle  + b(x), $$
where $L$ is a self-adjoint isomorphism, $\nabla b(x)$ is globally Lipschitz and $D^2b(x)$ is compact for every $x \in E$. Then operator $L$ gives a splitting of $E$ into  $E^+$ and $E^-$ corresponding to positive and negative eigenspaces respectively.
We would like to work with flows generated by the minus gradient equations i.e.
$$\dot{x} = -\nabla f (x).$$

We define the cohomological Conley Index in a Hilbert space $E$ to be the $E$-cohomology of an index pair in $E$. This is a different approach than in \cite{Izy} and \cite{Man} because it does not use finite dimensional approximations of the vector field. We compare our approach to \cite{Izy} after proving Proposition \ref{t:ipairs}. \\
Following \cite[ p.221]{GIP} we define an isolating neighborhood in a Hilbert space.
\begin{defn}
We say that a bounded and closed set $N \subset E$ is an isolating neighborhood if
$$\mbox{Inv}(N) \subset \mbox{int} N$$
\end{defn}

\begin{defn}
\label{d:ipair}
Let $N$ be an isolating neighborhood of an invariant set $S$. We call a closed and bounded pair $(N_1,N_0)$ an \emph{index pair} for $S$ if
\begin{enumerate}
\item $N_0$ is positively invariant relative to $N_1$,
\item $S \subset \mbox{int} N_1 \setminus N_0$ and
\item if $\gamma \in N_1$, $t > 0$ and $\gamma \cdot t \not\in N$, then there exists $t'$ such that $\gamma \cdot [0,t'] \subset N_1$ and $\gamma \cdot t' \in N_0$.

\end{enumerate}
Moreover, we say that an index pair is regular if the function
$$\tau(x) = \inf \{s \in \mathbb{R}^{\geq 0}: x \cdot [0,s] \not \subset N_1 \setminus N_0  \}$$
is continuous.
\end{defn}
\quad \\

Unless otherwise stated, we assume that all index pairs are regular (c.f. Remark \ref{exist} for the existence).\\

We say that an index pair $(N_1,N_0)$ is \textit{contained} in the isolating neighborhood $N$ if $N_0 \subset N_1 \subset N$ and $N_1$, $N_0$ are positively invariant relative to $N$ (c.f. \cite{smol} p.489).

\begin{defn}
We define the \textbf{cohomological Conley Index} of $S$ (denoted by $\mbox{ch}^*(S)$) to be $H^*_E(N_1,N_0)$, where $(N_1,N_0)$ is an index pair for $S$.
\end{defn}

Above definition only makes sense if we prove the independence of a choice of index pairs. This is stated in the following proposition.


\begin{prop}
\label{t:ipairs}
Suppose we have a flow as in the beginning of this section. Let $(N_1,N_0)$, $(\hat{N}_1, \hat{N}_0)$ be two regular index pairs for $S = \mbox{Inv}(N)$ contained in the same isolating neighborhood $N$. Then
$$H^*_E(N_1,N_0) \simeq H^*_E(\hat{N}_1,\hat{N}_0).$$
\end{prop}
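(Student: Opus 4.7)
The strategy is to realize the negative gradient flow $\phi_t$ as an $E$-homotopy and apply Lemmas \ref{lem1}, \ref{lem2} and strong excision to identify both index pairs with a common intermediate pair built from their intersection.

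First, I would verify that $\phi_t$ is an $E$-morphism for every $t \geq 0$ and that the family $(x, t) \mapsto \phi_t(x)$ is an $E$-homotopy. By the variation of constants formula,
$$\phi_t(x) = e^{-tL} x - \int_0^t e^{-(t-s)L}\nabla b(\phi_s(x))\,ds.$$
Self-adjointness of $L$ forces $e^{-tL}$ to be a linear automorphism preserving the splitting $E = E^+ \oplus E^-$, and the integral term is compact because $\nabla b - \nabla b(0)$ has compact derivative and therefore sends bounded sets into precompact sets. Preimage-boundedness follows from Lipschitz bounds on the flow.

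Next, introduce the intermediate pair
$$(M_1, M_0) := \bigl( N_1 \cap \hat N_1,\ (N_0 \cap \hat N_1) \cup (N_1 \cap \hat N_0) \bigr)$$
and prove $H^*_E(N_1, N_0) \simeq H^*_E(M_1, M_0) \simeq H^*_E(\hat N_1, \hat N_0)$. By symmetry it suffices to establish the first isomorphism. Using Lemma \ref{lem1}, I would deform $N_1$ into $(N_1 \cap \hat N_1) \cup N_0$: every forward orbit starting in $N_1 \setminus \hat N_1$ either enters $\hat N_1$ in finite time (its $\omega$-limit in $N$ lies in $S \subset \mbox{int}\,\hat N_1$) or exits $N_1$ through $N_0$ (axiom (3) of Definition \ref{d:ipair}), so a flow-based deformation accomplishes this. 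Strong excision (with $X = N_1 \cap \hat N_1$, $Y = N_0$, noting $X \cap Y = N_0 \cap \hat N_1$) then yields $H^*_E((N_1 \cap \hat N_1) \cup N_0, N_0) \simeq H^*_E(N_1 \cap \hat N_1, N_0 \cap \hat N_1)$. Finally, Lemma \ref{lem2} enlarges the relative part to $M_0$ via a similar flow deformation of $N_1 \cap \hat N_0$ into $N_0 \cap \hat N_1$, using that $S \cap \hat N_0 = \emptyset$ forces orbits through $N_1 \cap \hat N_0 \setminus N_0$ to eventually leave $\hat N_0$ and meet $N_0$ by axiom (3).

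The main obstacle is producing the $E$-homotopies demanded by Lemmas \ref{lem1} and \ref{lem2} from the flow: the raw map $\phi_t$ does not preserve the pair $(X, A)$ in general, so one must reparametrize $\phi_t$ by a state-dependent stopping time built from the exit-time function $\tau$ of Definition \ref{d:ipair}. Continuity of $\tau$ (the regularity assumption on the index pair) is what makes this reparametrization continuous. The most delicate verification is that the reparametrized map still decomposes as $L_t x + K(x, t)$ with $L_t$ a fixed linear automorphism preserving $E^\pm$ and $K$ compact; this relies on compactness of $D^2 b$, uniform bounds on $\tau$ on bounded sets, and splitting off the fixed linear contraction $e^{-tL}$ from a compactly-valued correction.
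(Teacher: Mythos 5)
Your overall scheme (flow deformations via Lemmas \ref{lem1} and \ref{lem2} plus strong excision, funnelled through the common pair $(M_1,M_0)=(N_1\cap\hat N_1,(N_0\cap\hat N_1)\cup(N_1\cap\hat N_0))$) differs from the paper, which instead squeezes each pair to $(N_1^T,N_0\cap N_1^T)$ and enlarges the exit sets to $N_0^{-T}$, obtaining mutual inclusions for large $T$. Either geometric scheme can work, but your write-up has a genuine gap at exactly the point where the infinite-dimensional difficulty sits, and the paper flags this with ``the assumption on the flow is crucial.'' Lemmas \ref{lem1} and \ref{lem2} require homotopies defined on $[0,1]$ with $\Psi_1(X)\subset Y$ (resp.\ $\Psi_1(A)\subset B$); when the homotopy is the stopped flow, this forces a \emph{uniform} time $T$ such that every point of $N_1$ either reaches $N_0$ or enters $\hat N_1$ by time $T$, and likewise a uniform bound for pushing $N_1\cap\hat N_0$ into $N_0$. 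Your justification is purely pointwise (``its $\omega$-limit in $N$ lies in $S\subset\mbox{int}\,\hat N_1$''), and in a Hilbert space pointwise entry/exit times do not yield a uniform bound, because $\mbox{cl}(N\setminus\hat N_1)$ is not compact; indeed even the assertion that the $\omega$-limit set is nonempty and contained in $S$ already needs a compactness argument. This is precisely what the paper supplies in Step 3: if $x_n\cdot(-n,n)$ stays in $\mbox{cl}(N\setminus N_1)$, then writing $x\cdot t=e^{tL}x+K(x,t)$ with $K(\cdot,t)$ compact and using the expansion estimate $|e^{T_1L}x|>\tfrac{3s}{\epsilon}|x|$ on $E^+$ (and its analogue on $E^-$ backwards in time), one extracts a convergent subsequence and derives a contradiction, producing the uniform times $T_1,T_2$. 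Without an argument of this kind your homotopies simply are not defined, so the proof as written does not go through; with it, your $(M_1,M_0)$ route would become a legitimate alternative to the paper's squeeze argument.

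A secondary, smaller point: your claim that the Duhamel integral term is compact ``because $\nabla b-\nabla b(0)$ has compact derivative and therefore sends bounded sets into precompact sets'' is a non sequitur as stated (pointwise compactness of $D^2b$ does not by itself give that $\nabla b$ maps bounded sets to precompact sets, nor directly that the time-$t$ map has the form $e^{tL}+\,$compact). This decomposition of the flow is indeed true in the present setting and is used by the paper as well (with references to \cite{GIP}, \cite{abbo}), but in your argument it carries real weight twice --- both for the $E$-morphism structure and for the uniform-time compactness argument above --- so it deserves a correct proof or a precise citation rather than a one-line heuristic.
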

\quad

Notice that the assumption on the flow is crucial.

Define the set $N_1^t$, $N_0^{-t}$ by
$$  N_1^t = \{ x \in N_1 : x \cdot [-t,0] \subset N_1\} $$
$$    N_0^{-t} = \{ x \in N_1: \exists_{y \in N_0}  \exists_{t' \in [0,t]} \quad y \cdot [-t',0] \subset N_1, y \cdot (-t') = x\}.$$

\begin{center} \includegraphics[scale=1.0]{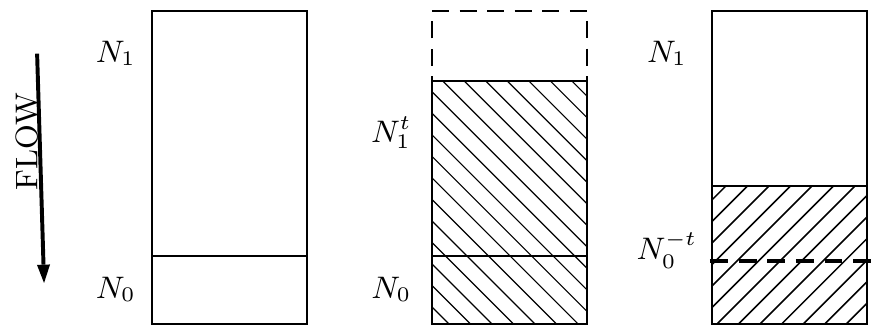} \end{center}

The proof of  Proposition \ref{t:ipairs} can be divided into three steps. \\

\textit{step 1:} For every $t > 0$ there is an isomorphism
 $$H^*_E(N_1,N_0) \to H^*_E(N_1^t,N_0 \cap N_1^t) $$

\textit{step 2:} For every $t > 0$ there is an isomorphism
 $$H^*_E(N_1,N_0) \to H^*_E(N_1,N_0^{-t})$$

\textit{step 3:} There exists $T > 0$ such that
$$(N_1^T,N_0 \cap N_1^T) \subset (\hat{N}_1, \hat{N}_0^{-T})$$
 $$(\hat{N}_1^T,\hat{N}_0 \cap \hat{N}_1^T) \subset (N_1, N_0^{-T})$$
and the inclusions induce isomorphisms of E-cohomology groups.

\begin{proof} \quad \\
Step 1: \\
Define $\Psi: (N_1,N_0) \times [0,1] \to (N_1,N_0)$ by
$$\Psi(x,s) = \left\{
             \begin{array}{ll}
               x \cdot s, & \hbox{if $x \cdot [0,s] \in N_1 \setminus N_0$;} \\
               x \cdot \tau(x), & \hbox{otherwise.}
             \end{array}
           \right.
$$

Put $X = N_1$ , $Y = N_1^t \cup N_0$, $A = N_0$. Lemma (\ref{lem1}) gives us
$$H^*_E(N_1,N_0) \simeq H^*_E(N_1^t \cup N_0, N_0) $$
From the excision axiom we have
$$H^*_E(N_1^t,N_0 \cap N_1^t) \simeq H^*_E(N_1^t \cup N_0, N_0).$$

Step 2 can be done in a similar way as Step 1. \\

Step 3: \\
Take $\hat{N} = \mbox{cl} (N \setminus N_1)$. For every $x \in \hat{N}$ there exists $T_x > 0$ such that $x \cdot (-T_x) \not\in \hat{N}$. In fact, we will show that there exists $T_1$ which satisfies above condition for every $x \in \hat{N}$. In a finite dimensional case this is just a consequence of the compactness of $\hat{N}$.\\
Suppose we have a sequence $(y_n) \subset \hat{N}$ such that $y_n \cdot (-2n,0) \subset \hat{N}$. Put $x_n := y_n \cdot (-n)$. Then both sets $x_n \cdot (-n,0)$ and $x_n \cdot (0,n)$ are contained in $\hat{N}$.
If $x_{n_k} \to x_0 $ then $x_0 \cdot (-\infty,0) \subset \hat{N}$ and $x_0 \in \hat{N}$ and we have arrived at a contradiction.

We now prove that if $x_n \cdot (-n,n) \subset \hat{N}$ for every $n$ then $(x_n)$ contains a convergent subsequence. \\

Suppose that $(x_n^+) \subset E^+$ does not have a convergent subsequence. Then there exists $\epsilon > 0$ such that $|x_n^+ - x_m^+| > \epsilon$ for every $n \neq m$.\\
Take $s, T_1 > 0$ such that
$$N \subset B(0,s)$$
$$|e^{T_1L}x| > \frac{3s}{\epsilon} |x|$$
for every $x \in E^+$. Then for $n,m > T_1$ we have

$$3s  < |e^{T_1L}(x_n - x_m)| \leqslant |x_m \cdot T_1| + |x_n \cdot T_1| + |K(x_m,T_1) - K(x_n,T_1)| \leqslant$$
$$ \leqslant 2s + |K(x_m,T_1) - K(x_n,T_1)|$$

and so $|K(x_m,T_1) - K(x_n,T_1)|  > s$ for every $n,m > T_1$. However, $K(\cdot,T_1)$ is compact and we have a contradiction. As a consequence, we can choose a convergent subsequence $(x_{n_l}^+)$. In a similar way, from  $(x_{n_l}^-)$ we can take a convergent subsequence $(x_{n_k}^-)$ and this gives us a convergence of $(x_{n_k})$.\\

By the same argument we can find $T_2 > 0$ such that for every $x \in N_0$ we have $x \cdot T \not\in N_0$. Take $T = \max \{ T_1,T_2, \overline{T_1}, \overline{T_2}\}$ where $\overline{T_1}, \overline{T_2}$ correspond to the pair $(\hat{N_1}, \hat{N_0})$. Then
$$(N_1^T,N_0 \cap N_1^T) \subset (\hat{N}_1, \hat{N}_0^{-T})$$
 $$(\hat{N}_1^T,\hat{N}_0 \cap \hat{N}_1^T) \subset (N_1, N_0^{-T})$$
 The proof that above inclusions induce isomorphisms on the cohomology groups runs as in the finite dimensional case (see \cite{smol} p.486-492 or \cite{Chang}, p.401 ).
\end{proof}

\begin{rem}
One can show an independence of the Conley Index without the assumption that both index pairs are contained in the same isolating neighborhood (c.f. \cite[ p.491 ]{smol}).

\end{rem}

\begin{rem}\label{exist}
We want to emphasize that for an isolated invariant set $S$ there exists a regular index pair. Let $U$ be an isolating neighbourhood and define
$G^T(U) = \bigcap_{|t| \leqslant T} U \cdot (-t,t)$. Then there exists $T > 0$ such that $G^T(U) \subset \inter{U}$. \\
Suppose the converse, i.e. $G^n(U) \not\subset \inter{U}$ for every $n$. Take $x_n \in G^n(U) \setminus \inter{U}$ i.e. $x_n \cdot (-n.n) \subset U$. There exists a convergent subsequence $x_{n_k} \to x_0 \in S \subset \inter{U}$ (see the proof above). A contradiction.\\
This proves that $G^T(U) \subset \inter{U}$ for some $T > 0$. For such an $U$ one can construct a regular index pair (see Theorem 5.5.13 in \cite{Chang}).
\end{rem}

Let us compare our definition of the cohomological Conley to the one which uses finite dimensional approximations of the vector field (\cite{Izy}). For a compact $K$ define $K_n : E \to E$ by
$$K(x) = P_n \circ K \circ P_n (x).$$
Let $S$ be an isolated invariant set for the flow generated by $F = L + K$ and let $\hat{N}$ be an isolating neighborhood for $S$. Then $F$ is related by  continuation (through E-homotopies) to $F_{n_0} = L + K_{n_0}$ for sufficiently large $n_0$.  Let $(N,L)$ be an index pair for the approximation i.e. for the finite dimensional flow generated by ${F_{n_0}}_{|E_{n_0}} : E_{n_0} \to E_{n_0}$. Clearly $(N_E,L_E) = (N \times D^{+}_{-n} \times D^{-}_{-n} , L \times  D^+_{-n} \times D^-_{-n} \cup N \times  D^+_{-n} \times \partial D^-_{-n} )$ is an index pair for $F$ in $E$, where $D^{+/-}_{-n}$ denotes a  disc in $E_n^\perp \cap E^{+/-}$. It is easy to check that $H^*_E(N_E,L_E)$ coincides with the cohomological Conley index defined in \cite{Izy}. Since $E$-cohomology does not depend on the index pair, those two approaches coincide in general.


\section{Main Theorem.}
Let us recall that we are interested  in a flow generated by the minus gradient vector field for a function
$f \in C^2(E,\mathbb{R})$ of the form
$$f(x) = \frac12 \langle Lx, x\rangle  + b(x), $$
where $L$ is a self-adjoint isomorphism, $\nabla b(x)$ is globally Lipschitz and $D^2b(x)$ is compact for every $x \in E$. \\ 

Let $S$ be a compact isolated invariant set containing only non-degenerate critical points $x_1$,...,$x_n$ and orbits connecting them.    \\
For a non-degenerate critical points $x$ we define an $E$-index by
$$\eind x = \dim V \cap E^+ - \dim V^\perp \cap E^-  = \dim V \cap E^+ - \codim E^+ +  V $$
where $V$ is the negative eigenspace of $D^2f(x)$, $E^+$ and $E^-$ are respectively positive  and negative  eigenspaces of $L$.\\
Suppose further that the transversality condition holds i.e. if $\eind y - \eind x = 1$ the stable manifold of $y$ and unstable of $x$ intersect transversally.

    \begin{thma}
\label{main}
We have
$$HF^*(S) \simeq  \mbox{ch}^*(S),$$
where $HF^*(S)$ denotes Floer cohomology.
\end{thma}

D.Salamon used an analogous theorem in a following way (see \cite{sal}). Take a function $f$ on finite dimensional closed manifold $M$. Then $(M, \emptyset)$ is an index pair for the isolated invariant set $S = M$. If the Morse cohomology is isomorphic to the cohomological Conley Index, we have
$$H^*(M) = H^*(M,\emptyset) \simeq H_{\mbox{Morse}}^*(M)$$

Thus, this is just another proof that  Morse theory recovers singular cohomology groups. \\

Let us first give the main ideas of the proof. Take two non-degenerate critical points $x$ and $y$ of a relative index $1$ and a connecting orbit $C$. By the transversality, $\hat{S} = \{x,y,C\}$ is an isolated invariant set. Now choose a triple $(N_2,N_1,N_0)$ in such a way that the pairs $(N_2,N_0)$, $(N_2,N_1)$, $(N_1,N_0)$ are index pairs for the invariant sets $\hat{S}$, $\{y\}$ and $\{x\}$ respectively (given a pair $(N_2,N_0)$ put $N_1 = N_2 \cap f^{-1}((-\infty,b])$ for an appropriate $b$). \\
\begin{center} \includegraphics[scale=0.7]{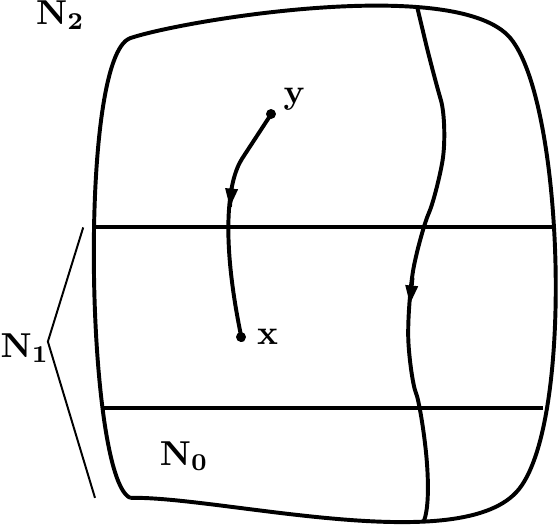} \end{center}
We have the long exact sequence
\begin{equation}\label{exact}\ldots \to H^{k}_E(N_2,N_0) \to H^{k}_E(N_1,N_0) \to H^{k+1}_E(N_2,N_1) \to H^{k+1}_E(N_2,N_0) \to
\end{equation}
One can show that the cohomological Conley Index for $S$ is trivial, i.e. all the groups $H^{n}_E(N_2,N_0)$ are trivial. Thus, for every $k$ we have an isomorphism
$$H^{k+1}_E(N_2,N_1) \to H^{k}_E(N_1,N_0)$$
A.Abbondandolo computed (see Proposition 14.6 in \cite{abbo}) the cohomological Conley Index for a non-degenerate critical point.

$$\mbox{ch}^k(\{x\}) = \left\{
\begin{array}{cc}
\mathbb{Z}_2 \ \mbox{for} \ k = \eind x\\
0 \quad \mbox{otherwise}
\end{array}
\right. $$
 The only nontrivial morphism in (\ref{exact}) is an isomorphism between $H^{k+1}_E(N_2,N_1)$ and $H^{k}_E(N_1,N_0)$ where $k = \eind x$. \\
By the compactness of $S$ and the transversality condition, we have finitely many orbits $C_1$, $C_2$,  $\ldots$, $C_m$ connecting $y$ and $x$. Take $\hat{S} = \{x,y, C_1, \ldots, C_m\}$.  By the additivity (see \cite[p. 201]{mcc}), the Conley connection matrix is a sum of isomorphisms from $\mathbb{Z}_2$ to itself so it is an \textbf{ algebraic count modulo 2}. This is exactly the \textbf{Floer boundary operator}. Since $ch^*(\{x\}) \simeq \mathbb{Z}_2$ one can think of $ch^*(\{x\})$ as of a generator of the Floer chain group $C_{\eind x}$.  \\

Here are some technical details of the above construction.  \\
We would like to prove that the Conley Index of $\hat{S} = \{x,y,C\}$ is trivial. Let us examine a special case. Suppose $C'$ is contained in one dimensional subspace $E_1$ and $x' = (-1,0) \in E_1 \oplus E_1^\perp$, $y' = (1,0)$, $C' = [-1,1] \times \{0\}$.
\begin{lemma}
\label{special}
 The Conley Index of $\hat{S}$ is trivial.
\end{lemma}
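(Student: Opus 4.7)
The plan is to build an explicit, product-shaped regular index pair $(N_1, N_0)$ adapted to the splitting $E = E_1 \oplus E_1^\perp$ and then to exhibit a straight-line $E$-homotopy of pairs that deformation retracts $N_1$ into $N_0$; the homotopy invariance axiom will then force $H^*_E(N_1, N_0) = 0$.

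I would begin by further splitting $E_1^\perp = (E_1^\perp \cap E^+) \oplus (E_1^\perp \cap E^-)$ and writing points of $E$ as $(s, v^+, v^-)$. On $E_1$, the point $y' = +1$ is repelling and $x' = -1$ is attracting, joined by $C'$; this is forced by $\eind y' - \eind x' = 1$ together with the fact that $C' \subset E_1$ lies in the unstable direction at $y'$ and in the stable direction at $x'$. In the transverse direction, compactness of $D^2 b$ and hyperbolicity of $L$ make $0 \in E_1^\perp$ hyperbolic for the linearised transverse flow, contracting on $E^+ \cap E_1^\perp$ and expanding on $E^- \cap E_1^\perp$. Choose $\epsilon, \rho > 0$ small enough that, on $N_1 := [-1-\epsilon, 1+\epsilon] \times \bar B_\rho^+ \times \bar B_\rho^-$, the nonlinear flow inherits these qualitative features, so that $N_1$ is an isolating neighbourhood of $\hat S$ and
\[
N_0 := \bigl(\{1+\epsilon\} \times \bar B_\rho^+ \times \bar B_\rho^-\bigr) \cup \bigl([-1-\epsilon, 1+\epsilon] \times \bar B_\rho^+ \times \partial \bar B_\rho^-\bigr)
\]
is the exit set. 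Standard hyperbolic estimates then show that $(N_1, N_0)$ is a regular index pair.

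I would then consider the homotopy
\[
\Psi_t(s, v^+, v^-) := \bigl((1-t) s + t(1+\epsilon),\; v^+,\; v^-\bigr), \qquad t \in [0,1],
\]
which contracts $N_1$ in the $s$-coordinate onto $\{1+\epsilon\} \times \bar B_\rho^+ \times \bar B_\rho^- \subset N_0$. Clearly $\Psi_0 = \mathrm{Id}$, $\Psi_1(N_1) \subset N_0$, and $\Psi_t(N_0) \subset N_0$ for all $t$, since the face $\{s = 1+\epsilon\}$ is fixed and the face $\{|v^-| = \rho\}$ is preserved. Writing $\Psi_t = \mathrm{Id} + K_t$ with $K_t(s, v^+, v^-) = (t(1+\epsilon-s), 0, 0)$, the perturbation $K_t$ has image in the one-dimensional space $E_1$ and hence maps bounded sets to precompact sets; the boundedness condition on preimages is automatic since $N_1$ is bounded. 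So $\Psi$ is an $E$-homotopy of pairs. By the homotopy invariance axiom, $\Psi_0^* = \Psi_1^*$ on $H^*_E(N_1, N_0)$; but $\Psi_1$ factors as $(N_1, N_0) \to (N_0, N_0) \hookrightarrow (N_1, N_0)$, and $H^*_E(N_0, N_0) = 0$, so $\Psi_1^* = 0$. Hence the identity on $H^*_E(N_1, N_0)$ is zero, giving $H^*_E(N_1, N_0) = \mathrm{ch}^*(\hat S) = 0$.

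The main obstacle is the first step: justifying that $(N_1, N_0)$ is a regular index pair for $\hat S$ with exit set exactly $N_0$. This requires choosing $\epsilon$ and $\rho$ small enough that the actual gradient flow, which need not respect the splitting $E_1 \oplus E_1^\perp$ globally, nevertheless crosses the boundary of $N_1$ in the expected pattern and has continuous exit-time function. This should follow from a standard Gronwall/cone-field argument exploiting compactness of $D^2 b$ and hyperbolicity of the transverse linearisation; by contrast, the homotopy step is essentially soft, since contracting along a one-dimensional subspace is automatically a compact perturbation of the identity.
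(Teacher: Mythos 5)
Your homotopy step is sound: given a product-shaped regular index pair of the form you describe, the straight-line retraction onto the face $\{s=1+\epsilon\}$ is indeed an $E$-homotopy of pairs (the perturbation has one-dimensional range, hence maps bounded sets to precompact sets), and either your factoring argument or Lemma \ref{lem1} with $Y=A=N_0$ gives $H^*_E(N_1,N_0)\simeq H^*_E(N_0,N_0)=0$. The genuine gap is the step you defer to a ``standard Gronwall/cone-field argument'': the assertion that the transverse flow is contracting on $E^+\cap E_1^\perp$ and expanding on $E^-\cap E_1^\perp$, so that your $N_0$ is the exit set. That is true for the pure linear field $-L$, but the relevant linearization at a point $p$ of the orbit is $-(L+D^2b(p))$, and $D^2b(p)$, though compact, need not be small: it can reverse the sign of $\langle -(L+D^2b(p))v,v\rangle$ on a finite-dimensional subspace of $E^-$, so the flow enters $N_1$ through part of the face $\{|v^-|=\rho\}$. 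That destroys positive invariance of $N_0$ relative to $N_1$, and it cannot be cured by shrinking $\rho$ (it is a first-order phenomenon) or $\epsilon$ (it occurs in the middle of the orbit, not only near the endpoints). Moreover the genuinely contracting/expanding transverse directions rotate along $C'$ and differ at $x'$ and $y'$, so no single product $\bar B^+_\rho\times\bar B^-_\rho$ modelled on the fixed splitting $E^\pm$ can serve; building an adapted (twisted) index pair would require an exponential-dichotomy or normal-hyperbolicity analysis along the orbit, which is precisely the hard content you are assuming away.

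The paper takes a different and softer route that avoids constructing any index pair: following McCord, it deforms the vector field through a chain of continuations that keep a fixed neighborhood isolating --- first linearizing the transverse component along the orbit, then decoupling the two factors, and finally subtracting a constant $M+1$ from the $E_1$-component so that the first component of the field is strictly negative on the whole neighborhood. The resulting field has empty maximal invariant set there, hence trivial index, and continuation invariance transfers this back to the original flow. If you want to retain your explicit retraction, you should first perform such a normalization of the vector field (after which the product pair genuinely is an index pair for the model field); applied directly to the original flow, your construction does not produce an index pair in general.
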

\begin{proof}
We follow an approach of C.McCord (\cite{mcc}) i.e. we use a series of continuations. Choose a small isolating neighborhood $N$ of $S'$.  First continue the vector field $F(x,y) = (F_x(x,y), F_y(x,y))$ to $F_1(x,y) = (F_x(x,0) + D_yF_x(x,0)y, F_y(x,0) + D_yF(x,0)y) = (F_x(x,0) + D_yF_x(x,0)y,D_yF(x,0)y)$ and then to $F_2(x,y) = (F_x(x,0), D_yF(x,0)y)$. Now put $a(x) = F_x(x,0)$, $M = \max_{x \in [0,1]} a(x)$ and continue $F_2(x,y)$ to
$$F_3(x,y) = (F_x(x,0) - M - 1, D_yF(x,0)y)  .$$
Notice that $\mbox{inv}{(F_3, S)} = \emptyset$ and thus the Conley Index is trivial.
\end{proof}

Now we would like to find an $E$-homotopy which  reduces a general case to the above one (compare section C in \cite{proper}).\\

Let $M$ be a  compact $C^1$ submanifold of a Hilbert space $E$.

\begin{lemma}
\label{projecton}
There exists a finite dimensional subspace $T$ of $E$ such that the orthogonal projection $P_T$  onto $T$ maps $M$ diffeomorphically onto $P_T(M)$.
\end{lemma}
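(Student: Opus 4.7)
The plan is to build $T$ in two stages: first to make $P_T$ a local $C^1$ diffeomorphism on $M$, then to enlarge $T$ further so as to make it globally injective.

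First I would note that a compact $C^1$ submanifold of a Banach space is necessarily finite-dimensional, because it must be locally compact and an infinite-dimensional Banach space has no locally compact open set (Riesz's theorem); in particular each tangent space $T_xM$ is finite-dimensional. Since $P_{T_xM}$ restricts to the identity on $T_xM$, the inverse function theorem furnishes a neighborhood $U_x$ of $x$ in $M$ on which $P_{T_xM}$ is a $C^1$ diffeomorphism onto its image. Extracting a finite subcover $\{U_{x_1},\ldots,U_{x_N}\}$ of $M$, I put $T_0 = \sum_{i=1}^{N} T_{x_i}M$. Because $T_{x_i}M\subset T_0$, one has the factorisation $P_{T_{x_i}M} = P_{T_{x_i}M}\circ P_{T_0}$; this forces $P_{T_0}$ to be injective on each $U_{x_i}$ (if $P_{T_0}(a)=P_{T_0}(b)$ with $a,b\in U_{x_i}$, then $P_{T_{x_i}M}(a)=P_{T_{x_i}M}(b)$, hence $a=b$) and makes $P_{T_0}|_{T_xM}$ injective for every $x$. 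Thus $P_{T_0}|_M$ is already a $C^1$ immersion and locally injective on $M$.

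For global injectivity I would fix a Lebesgue number $\varepsilon>0$ of the cover $\{U_{x_i}\}$ with respect to the metric of $E$. Any pair $a\neq b$ in $M$ with $P_{T_0}(a)=P_{T_0}(b)$ must then lie in different $U_{x_i}$'s and therefore satisfy $\|a-b\|\geq\varepsilon$, so the ``bad set''
$$B = \{(a,b)\in M\times M : \|a-b\|\geq\varepsilon,\ P_{T_0}(a)=P_{T_0}(b)\}$$
is compact. For each $(a_0,b_0)\in B$ put $v_{a_0,b_0}=(a_0-b_0)/\|a_0-b_0\|$; then $\langle a_0-b_0,v_{a_0,b_0}\rangle=\|a_0-b_0\|\geq\varepsilon$, and by continuity $\langle a-b,v_{a_0,b_0}\rangle>\varepsilon/2$ on a neighborhood of $(a_0,b_0)$ in $B$. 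A finite subcover yields vectors $v_1,\ldots,v_k$, and I set $T = T_0 + \spa(v_1,\ldots,v_k)$. Then for every $(a,b)\in B$ some $\langle a-b,v_j\rangle$ is nonzero, so $a-b\notin T^\perp$ and $P_T(a)\neq P_T(b)$; combined with local injectivity of $P_T$ on each $U_{x_i}$ (which persists since $T\supset T_0$), this gives global injectivity of $P_T|_M$.

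To close, $P_T|_M$ is an injective $C^1$ immersion from a compact manifold into the finite-dimensional space $T$, hence a $C^1$ embedding and a diffeomorphism onto $P_T(M)$. I expect the Lebesgue-number step to be the main obstacle: without the observation that collisions of $P_{T_0}$ are bounded away from the diagonal, one cannot reduce the failure of global injectivity to a compactness argument on $B$.
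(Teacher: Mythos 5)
Your proof is correct and follows essentially the same two-stage strategy as the paper's: first project onto the span $T_0$ of finitely many tangent spaces $T_{x_i}M$ to get a locally injective $C^1$ immersion of $M$, then enlarge this space by finitely many difference vectors of colliding pairs, extracted by compactness, to force global injectivity, and conclude since an injective immersion of a compact manifold is an embedding. The only difference is bookkeeping in the second stage: the paper separates collisions pointwise (each fiber of $P_{T'}$ is finite, one adds $y_i-x$ and takes a finite cover by neighbourhoods $V_x$), whereas you treat all collision pairs at once as a compact subset of $M\times M$ kept away from the diagonal by a Lebesgue number --- a clean and equally valid variant.
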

 \begin{proof}
 For every $x \in M$ there is an open neighbourhood $U_x$ such that $U_x$ is diffeomorphic to the open neighbourhood of $0$ in $T_xM$ via the exponential map. Choose a finite subcover $U_{x_1}$,$U_{x_2}$, ..., $U_{x_k}$ and put
 $$T' = \mbox{span}\{T_{x_i}M: i = 1, \ldots, k \}.$$
The orthogonal projection $P_{T'|M}: M \to T'$ is an imbedding. Thus, for a given $x \in M$, there is only a finite number of points $y_1$,$y_2$, ..., $y_p$ such that $P_{T'}x = P_{T'}y_i$. Define $T'_x$ to be the space spanned by $T'$ and $y_1 - x$, $y_2 - x$, ... , $y_p-x$ and let $P_x$ be the orthogonal projection onto $T'_x$. It is easy to see that there is an open neighbourhood $V_x$ of $x$ such that $y \in V_x$, $z \in M$ and $P_xz = P_xy$ imply $z = y$. Again, choose a finite cover $V_{x_1}$,$V_{x_2}$,...,$V_{x_q}$ and put
$$T = \mbox{span}\{T'_{i}: i = 1, \ldots, q \}$$
 \end{proof}

 \begin{lemma}
There exists an $E$-homotopy and a finite dimensional subspace $T_1$
\begin{enumerate}
\item $\Psi(\cdot,0) = \mbox{Id}$,
\item $\Psi(M,1)$ is contained in $T_1$.
\end{enumerate}
\end{lemma}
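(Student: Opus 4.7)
The plan is to project $M$ into the finite dimensional subspace $T$ given by Lemma \ref{projecton}, but to do so in a way that fits the $E$-morphism structure $L_t x + K(x,t)$ with $K$ compact. The naive homotopy $(1-t)x + tP_T(x)$ is doubly unsuitable: $L_t = (1-t)\mbox{Id} + tP_T$ degenerates at $t=1$, and the correction $-x$ is not a compact operator. The key observation is that if one first retracts onto $M$ before projecting, then both $r(x)$ and $P_T(r(x))$ land in compact sets, so the linear part can be kept as the identity throughout.

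Concretely, I would set $T_1 := T$ and invoke a tubular neighbourhood of the compact $C^1$ submanifold $M\subset E$: a bounded open neighbourhood $U$ of $M$ together with a continuous retraction $r\colon U\to M$. Pick a continuous bump $\eta\colon E\to [0,1]$ equal to $1$ on $M$ and supported in a bounded subset of $U$, and extend $\eta\cdot r$ by $0$ outside $U$ to a continuous map on all of $E$. Then set
$$\Psi(x,t) := x + t\,\eta(x)\bigl(P_T(r(x)) - r(x)\bigr),$$
so that $\Psi(\cdot,0) = \mbox{Id}$ and, because $\eta\equiv 1$ and $r=\mbox{Id}$ on $M$, one has $\Psi(x,1) = P_T(x) \in T_1$ for every $x\in M$.

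To see that $\Psi$ is an $E$-homotopy, write $L_t \equiv \mbox{Id}$ (which trivially preserves $E^+$) and $K(x,t) = t\eta(x)(P_T r(x) - r(x))$. The image of $K$ lies in the compact subset $[0,1]\cdot(M\cup P_T(M))$ of $E$, so $K$ sends bounded sets into precompact sets; and the bounded range of $K$ immediately gives that $\Psi^{-1}(\text{bounded})$ is bounded. The main technical point to verify carefully is continuity of $\eta\cdot r$ across $\partial U$, which is routine once $\eta$ vanishes near the boundary of $U$. I do not anticipate further obstacles: Lemma \ref{projecton} supplies the projection, and replacing $-x$ by the compact $-r(x)$ is precisely what turns the interpolation into a legitimate $E$-homotopy.
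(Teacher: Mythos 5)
Your construction is correct, but it reaches compactness of the correction term by a different mechanism than the paper. The paper also keeps the linear part equal to the identity, but it makes $K$ compact by \emph{factoring it through the finite-dimensional space} $T$: using the injectivity of $P_T|_M$ from Lemma \ref{projecton} it defines $\eta_0(P_Tx)=x-\phi(x)$ on $P_T(M)\subset T$ (where $\phi$ is a partition-of-unity average of the affine maps $\phi_{x_i}(y)=P_T(y-x_i)+x_i$, forcing $T_1=\spa\{T,x_1,\dots,x_k\}$), extends it over $T$, and sets $\Psi(x,t)=x-t\eta(P_Tx)$; since bounded subsets of $T$ are precompact, $K(x,t)=-t\eta(P_Tx)$ sends bounded sets to precompact sets. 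You instead make $K$ compact by first retracting a bounded neighbourhood onto the compact set $M$ and cutting off with a bump function, so the image of $K$ sits inside the fixed compact set $\{s(P_T m-m): s\in[0,1],\ m\in M\}$ --- note this is the correct compact set, not $[0,1]\cdot(M\cup P_T(M))$ as you wrote, a harmless slip. One point to phrase carefully: a genuine $C^1$ tubular neighbourhood in a Hilbert space is more than you need and is slightly delicate at $C^1$ regularity; what you actually use is only a continuous neighbourhood retraction $r\colon U\to M$, which holds because a compact manifold is an ANR and $M$ is closed in the metric space $E$. With that justification your argument is complete, and it even has a small advantage over the paper's: your time-one map restricted to $M$ is exactly $P_T|_M$, which Lemma \ref{projecton} already guarantees to be a diffeomorphism onto its image (convenient for the subsequent reduction to Lemma \ref{special}), whereas the paper's time-one map is the averaged map $\phi$ and lands only in the larger space $T_1$.
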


 \begin{proof}
 By Lemma \ref{projecton} we can find a finite dimensional space such that $P_T:M \to T$ is an injection. For $x \in M$  define $\phi_x:M \to E$ by
 $$\phi_x(y) = P_T(y-x) + x$$
 Then $\phi_x$ is an imbedding and $\phi_x(x) = x$. Let $U_{x_1}$, ..., $U_{x_k}$ be a cover of $M$ and $\{\nu_i\}$ be a subordinated partition of unity. Define $\phi: M \to E$ by $\phi(x) = \sum \nu_i(x) \phi_{x_i}(x)$. Take $T_1$ to be the space spanned by $T$ and $x_1$, ..., $x_k$. Then $\phi(M) \subset T_1$. \\
 Define $\eta_0: P_T(M) \to T^\perp$ by $\eta(P_Tx) = x - \phi(x)$. Since $P_T(M)$ is a $C^1$ submanifold of $T$ we can extend $\eta$ to a $C^1$ map on $T$. Define $\Psi(x,y) = x - t\eta(P_Tx)$.
 \end{proof}

Two critical points together with  an orbit between them is a compact submanifold of $E$. Thus we can apply above lemma to $M = \hat{S}$. Suppose $\Phi(\hat{S},1)$ is contained in a finite dimensional space $T_1$. Choose a one dimensional subspace $E_1 \subset T_1$ and a diffeomorphism $h$ of $T_1$ which takes $\hat{S}$ onto $(-1,1) \subset E_1$. Extend $h$ to $E$ by the identity on $T^{\perp}$. This reduces a general case to the one in Lemma \ref{special}.


\section{Further Directions.}
Some of the Floer theories  come with additional symmetry. One expects an analogous theorem to the main theorem of this paper for the equivariant Morse cohomology and the equivariant Conley Index in a Hilbert space. \\
For an $S^1$-action  there is  the conjecture that the Monopole Floer cohomology and the Seiberg-Witten Floer cohomology are isomorphic ($\mbox{HM}^*(Y) \simeq \mbox{HSW}^*(Y)$). However, this should be treated more carefully since one cannot assume the existence of the (local) flow in a Hilbert space. \\
Another direction, that one would like to investigate, is the case of Hilbert (Banach) manifolds. Let us just recall that recently intensively explored Lagrangian intersection Floer theory (see \cite{fukaya}) is a Floer theory on a Banach manifold.

 \subsection*{Acknowledgement} I am indebted to K.G\k{e}ba and M.Izydorek, who provided valuable comments on an earlier version. I would also like to thank J.Maksymiuk for pointing out results in \cite{Chang} and to the referee for a number of helpful suggestions for improvement in the article.

\thebibliography{9999999}

\bibitem[Abb97]{abbo} A.Abbondandolo, \textbf{A new cohomology for the Morse theory of strongly indefinite functionals on Hilbert spaces.} Topol. Methods Nonlinear Anal. 9 (1997), no. 2, 325-382.

\bibitem[AGP]{Prieto} M.Aguilar; S.Gitler; C.Prieto, \textbf{Algebraic topology from a homotopical viewpoint.} Universitext. Springer-Verlag, New York, 2002. xxx+478 pp. ISBN: 0-387-95450-3

\bibitem[C-J]{Crabb} M.C.Crabb; J.Jaworowski \textbf{Aspects of the Borsuk - Ulam theorem.} J. Fixed Point Theory Appl. 13 (2013), no. 2, 459-488.

\bibitem[Ch]{Chang} Chang, K.-C. \textit{Methods in nonlinear analysis}. Springer Monographs in Mathematics. Springer-Verlag, Berlin, 2005. x+439 pp. ISBN: 978-3-540-24133-1; 3-540-24133-7.

\bibitem[FOOO]{fukaya}  K.Fukaya; Y.G.Oh; H.Ohta; K.Ono \textbf{Lagrangian intersection Floer theory: anomaly and obstruction. Part  I.} AMS/IP Studies in Advanced Mathematics, 46.1. American Mathematical Society, Providence, RI; International Press, Somerville, MA, 2009. xii+396 pp. ISBN: 978-0-8218-4836-4

\bibitem[G]{proper} K.G\k{e}ba, \textbf{Fredholm $\sigma$-proper maps of Banach spaces.} Fund. Math. 64 (1969) 341-373.

\bibitem[G-G]{GG}  K.G\k{e}ba; A.Granas, \textbf{Infinite dimensional cohomology theories.} J. Math. Pures Appl. (9) 52 (1973), 145-270.

\bibitem[GIP]{GIP} K.G\k{e}ba; M.Izydorek; A.Pruszko, \textbf{The Conley index in Hilbert spaces and its applications.} Studia Math. 134 (1999), no. 3, 217-233.

\bibitem[G-D]{GD}    A.Granas; J.Dugundji, \textbf{Fixed point theory. Springer Monographs in Mathematics.} Springer-Verlag, New York, (2003). xvi+690 pp. ISBN: 0-387-00173-5
\bibitem[Izy]{Izy} M.Izydorek, \textbf{A cohomological Conley index in Hilbert spaces and applications to strongly indefinite problems.} J. Differential Equations 170 (2001), no. 1, 22-50.

\bibitem[K-Sz]{KSZ} W.Kryszewski; A.Szulkin,\textbf{An infinite-dimensional Morse theory with applications.} Trans. Amer. Math. Soc. 349 (1997), no. 8, 3181-3234.
\bibitem[Man]{Man} C.Manolescu, \textbf{Seiberg-Witten-Floer stable homotopy type of three-manifolds with $b_1 = 0$.} Geom. Topol. 7 (2003), 889-932.

\bibitem[McC]{mcc} C.McCord,  \textbf{The connection map for attractor-repeller pairs.}
Trans. Amer. Math. Soc. 307 (1988), no. 1,195-203.

\bibitem[Sal]{sal} D.Salamon, \textbf{Morse theory, the Conley index and Floer homology.} Bull. London Math. Soc. 22 (1990), no. 2, 113-140.

\bibitem[Smol]{smol} J.Smoller, \textbf{ Shock waves and reaction-diffusion equations.} Second edition. Grundlehren der Mathematischen Wissenschaften [Fundamental Principles of Mathematical Sciences], 258. Springer-Verlag, New York, 1994.

\bibitem[Sz]{sz}  A.Szulkin, \textbf{Cohomology and Morse theory for strongly indefinite functionals.} Math. Z. 209 (1992), no. 3, 375-418.

\bibitem[Tau]{Taubes}  C.F.Taubes, \textbf{The Seiberg-Witten equations and the Weinstein conjecture. II. More closed integral curves of the Reeb vector field.} Geom. Topol. 13 (2009), no. 3, 1337-1417.

\newpage

\vspace{1cm}
Maciej Starostka\\
Institute of Mathematics\\
Polish Academy of Sciences\\
00-956 Warsaw\\
ul. Sniadeckich 8\\
and\\
Gdansk University Of Technology\\
80-233 Gdañsk,\\
ul. Gabriela Narutowicza 11/12 \\
Poland\\
E-mail: maciejstarostka@gmail.com

\end{document}